\theoremstyle{change}  
\newtheorem{theorem}{Theorem}[section] 
\newtheorem{lemma}[theorem]{Lemma}  
\newtheorem{corollary}[theorem]{Corollary}
\newtheorem{Remark}[theorem]{Remark}
\newtheorem{definition}[theorem]{Definition}
\newenvironment{proof}{\noindent{\bf Proof}\ }{\qed\bigskip}
\newcommand{\Aut}{\mathrm{Aut}}
\newcommand{\calF}{\mathcal{F}}
\newcommand{\calI}{\mathcal{I}}
\newcommand{\calP}{\mathcal{P}}
\newcommand{\catfont}{\mathsf}
\newcommand{\FF}{\mathbb{F}}
\newcommand{\Inn}{\mathrm{Inn}}
\newcommand{\lexp}[2]{\setbox0=\hbox{$#2$} \setbox1=\vbox to \ht0{}\,\box1^{#1\,}\!#2}
\newcommand{\lMod}[1]{\llap{\phantom{|}}_{#1}\catfont{Mod}}
\newcommand{\Out}{\mathrm{Out}}
\newcommand{\qed}{\nobreak\hfill
                  \vbox{\hrule\hbox{\vrule\hbox to 5pt
                  {\vbox to 8pt{\vfil}\hfil}\vrule}\hrule}}
\newcommand{\ZZ}{\mathbb{Z}}
\newcommand{\DD}{D^{\Delta}}
\newcommand{\TD}{T^{\Delta}}
\newcommand{\FFTD}{\mathbb{F}T^{\Delta}}
\newcommand{\RTD}{RT^{\Delta}}
\newcommand{\Proj}{\mathrm{Proj}}
\newcommand{\Fppk}[1]{\calF_{#1pp_k}^\Delta}
\def\dom{\backslash}
\title{Stable functorial equivalence of blocks}
\author{Serge Bouc and Deniz Y\i lmaz}
\date{}
\providecommand{\keywords}[1]
{
  \small\smallskip\par	
  \hspace{2ex}\textbf{Keywords:} #1
}
\providecommand{\msc}[1]
{
  \small\smallskip\par	
  \hspace{2ex}\textbf{MSC2020:} #1%
}
\begin{document}
\sloppy

\maketitle

\begin{abstract}
Let $k$ be an algebraically closed field of characteristic $p>0$, let $R$ be a commutative ring and let $\FF$ be an algebraically closed field of characteristic $0$.  We introduce the category $\overline{\Fppk{R}}$ of stable diagonal $p$-permutation functors over $R$. We prove that the category  $\overline{\Fppk{\FF}}$ is semisimple and give a parametrization of its simple objects in terms of the simple diagonal $p$-permutation functors.

We also introduce the notion of a stable functorial equivalence over $R$ between blocks of finite groups. We prove that if $G$ is a finite group and if $b$ is a block idempotent of $kG$ with an abelian defect group $D$ and Frobenius inertial quotient $E$, then there exists a stable functorial equivalence over $\FF$ between the pairs $(G,b)$ and $(D\rtimes E,1)$.
\end{abstract}

\keywords{block, diagonal $p$-permutation functors, functorial equivalence, Frobenius inertial quotient.}
\msc{16S34, 20C20, 20J15.}

\section{Introduction}

In past decades, various notions of equivalences between blocks of finite groups have been studied such as splendid Morita equivalence, splendid Rickard equivalence, $p$-permutation equivalence, isotypies and perfect isometries (\cite{Broue1990}, \cite{BoltjeXu2008}, \cite{BoltjePerepelitsky2020}). These equivalences are related to prominent conjectures in modular representation theory such as Brou{\'e}'s abelian defect group conjecture (Conjecture 9.7.6 in~\cite{linckelmann2018}), Puig's finiteness conjecture (Conjecture 6.4.2 in~\cite{linckelmann2018}) and Donovan's conjecture (Conjecture 6.1.9 in~\cite{linckelmann2018}).

Recently, in \cite{BoucYilmaz2022} we introduced another equivalence of blocks, namely {\em functorial equivalence}, using the notion of diagonal $p$-permutation functors: Let $k$ be an algebraically closed field of characteristic $p>0$, let $\FF$ be an algebraically closed field of characteristic $0$ and let $R$ be a commutative ring. We denote by $Rpp_k^\Delta$ the category whose objects are finite groups and for finite groups $G$ and $H$ whose morphisms from $H$ to $G$ are the Grothendieck group $R\TD(G,H)$ of {\em diagonal} $p$-permutation $(kG,kH)$-bimodules. An $R$-linear functor from $Rpp_k^\Delta$ to $\lMod{R}$ is called a {\em diagonal $p$-permutation functor}. To each pair $(G,b)$ of a finite group $G$ and a block idempotent $b$ of $kG$, we associate a canonical diagonal $p$-permutation functor over $R$, denoted by $RT^\Delta_{G,b}$. If $(H,c)$ is another such pair, we say that $(G,b)$ and $(H,c)$ are {\em functorially equivalent over $R$} if the functors $RT^\Delta_{G,b}$ and $RT^\Delta_{H,c}$ are isomorphic.

In \cite{BoucYilmaz2022} we proved that the category of diagonal $p$-permutation functors over $\FF$ is semisimple, parametrized simple functors and provided three equivalent descriptions of the decomposition of the functor $\FFTD_{G,b}$ in terms of the simple functors (\cite[Corollary~6.15 and Theorem~8.22]{BoucYilmaz2022}). We proved that the number of isomorphism classes of simple modules, the number of ordinary characters, and the defect groups are preserved under functorial equivalences over $\FF$ (\cite[Theorem~10.5]{BoucYilmaz2022}).  Moreover we proved that for a given finite $p$-group $D$, there are only finitely many pairs $(G,b)$, where $G$ is a finite group and $b$ is a block idempotent of $kG$, up to functorial equivalence over $\FF$ (\cite[Theorem~10.6]{BoucYilmaz2022}) and we provided a sufficient condition for two blocks to be functorially equivalent over $\FF$ in the situation of Brou{\'e}'s abelian defect group conjecture (\cite[Theorem~11.1]{BoucYilmaz2022}).

In this paper, we introduce the notion of {\em stable} diagonal $p$-permutation functors and {\em stable} functorial equivalences. We denote by $\overline{Rpp_k^\Delta}$ the quotient category of $Rpp_k^\Delta$ by the morphisms that factor through the trivial group. A {\em stable diagonal $p$-permutation functor over $R$} is an $R$-linear functor from $\overline{Rpp_k^\Delta}$ to $\lMod{R}$, or equivalently, a diagonal $p$-permutation functor which vanishes at the trivial group. In particular, the simple diagonal $p$-permutation functors $S_{L,u,V}$ with $L\neq 1$ are (simple) stable diagonal $p$-permutation functors. Our first main result is the following.

\begin{theorem}\label{thm semisimplicity}
The category $\overline{\Fppk{\FF}}$ of stable diagonal $p$-permutation functors over $\FF$ is semisimple. The simple stable diagonal $p$-permutation functors are precisely the simple diagonal $p$-permutation functors $S_{L,u,V}$ with $L\neq 1$. 
\end{theorem}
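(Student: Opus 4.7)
The plan is to view $\overline{\Fppk{\FF}}$ as a full subcategory of $\Fppk{\FF}$ and to reduce the statement to the known semisimplicity of the latter. The identification of stable functors with ordinary diagonal $p$-permutation functors vanishing at the trivial group is already given in the introduction; I would first confirm that the resulting forgetful functor $\overline{\Fppk{\FF}} \to \Fppk{\FF}$ is fully faithful. Fullness is automatic: if $F$ and $F'$ both vanish at $1$, then any natural transformation $F \to F'$ in $\Fppk{\FF}$ is trivially zero on morphisms factoring through the trivial group (since these become maps that factor through $F(1) = 0$), so the natural transformation descends to $\overline{\Fppk{\FF}}$. Faithfulness is clear. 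Thus $\overline{\Fppk{\FF}}$ may be identified with the full subcategory of $\Fppk{\FF}$ consisting of functors vanishing at $1$.

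Next, I would invoke Corollary~6.15 of \cite{BoucYilmaz2022}: $\Fppk{\FF}$ is semisimple, with simple objects the functors $S_{L,u,V}$. The key additional ingredient is the minimality property of the parameter $L$, namely $S_{L,u,V}(H) = 0$ whenever $|H| < |L|$, while $S_{L,u,V}(L) \neq 0$. Specializing to $H = 1$ gives $S_{L,u,V}(1) = 0$ if and only if $L \neq 1$. Granting this, any stable functor $F$ decomposes as $F = \bigoplus_i S_{L_i, u_i, V_i}$ by semisimplicity of $\Fppk{\FF}$, and $F(1) = 0$ forces each $L_i \neq 1$; conversely, any such direct sum vanishes at $1$ and is therefore stable. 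This simultaneously proves that $\overline{\Fppk{\FF}}$ is semisimple and that its simples are precisely the $S_{L,u,V}$ with $L \neq 1$.

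The only non-formal step is verifying the minimality property $S_{L,u,V}(1) = 0 \iff L \neq 1$. This is a standard feature of the parametrization of simples in functor categories of biset type: by construction $L$ is the \emph{minimal group} of $S_{L,u,V}$, i.e.\ a group of smallest order such that $S_{L,u,V}(L) \neq 0$, which immediately yields the desired vanishing at strictly smaller groups. If this is not recorded explicitly in \cite{BoucYilmaz2022} in the exact form required, it follows from a brief inspection of the construction of $S_{L,u,V}$ there (typically as a quotient of a standard functor generated in degree $L$). Once this minimality input is secured, the rest of the proof is purely formal.
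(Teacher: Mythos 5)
Your proof is correct, but it takes a genuinely different route from the paper's. The paper argues via representable functors: it decomposes $\FFTD(-,G)$ into simples, proves Lemma~\ref{lem projectivesubfunctor} identifying the $S_{1,1,\FF}$-isotypic component $\calI(-,G)$ with the subfunctor $\FF\Proj(-,G)$ by a dimension count (using $l(k(G\times H))=l(kG)\,l(kH)$ together with the containment $\FF\Proj(-,G)\subseteq\calI(-,G)$), concludes that the representable stable functor $\overline{\FFTD(-,G)}$ is precisely the sum of the simple summands $S_{L,u,V}$ with $L\neq 1$, and finishes by observing that $\overline{\Fppk{\FF}}$ is generated by its representables. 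You instead exploit the identification of $\overline{\Fppk{\FF}}$ with the full subcategory of $\Fppk{\FF}$ of functors vanishing at the trivial group (this is the paper's Remark in Section~\ref{sec stablefunctors}, and your justification that the killed morphisms become maps factoring through $F(\mathbf{1})=0$ is exactly the content of the paper's first lemma there, $P(G,H)=T^\Delta(G,\mathbf 1)\circ T^\Delta(\mathbf 1,H)$), and then decompose an \emph{arbitrary} stable functor inside the ambient semisimple category, using only the fact that $S_{L,u,V}(\mathbf 1)=0$ if and only if $L\neq 1$. Both arguments ultimately rest on the same two external inputs — semisimplicity of $\Fppk{\FF}$ and the vanishing of $S_{L,u,V}$ at $\mathbf 1$ for $L\neq 1$, the latter being a fact the paper also invokes without proof inside Lemma~\ref{lem projectivesubfunctor} and which indeed follows from the evaluation formula in \cite{BoucYilmaz2022}. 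Your version is shorter and bypasses the dimension count of Lemma~\ref{lem projectivesubfunctor} entirely; the paper's version buys, as a by-product, the explicit structure of the representable stable functors and the equality $\calI(-,G)=\FF\Proj(-,G)$ of subfunctors, which is the concrete form of the statement underlying the later multiplicity arguments. The one point worth making fully explicit in your write-up is the transfer of the direct-sum decomposition back along the fully faithful embedding (including that each $S_{L,u,V}$ with $L\neq 1$ remains simple in the subcategory, which follows since a fully faithful exact embedding reflects subobjects); this is routine but is the step doing the work in your final sentence.
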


Given a finite group $G$ and a block idempotent $b$ of $kG$, we define a stable diagonal $p$-permutation functor $\overline{R\TD_{G,b}}$ similar to $R\TD_{G,b}$, see Definition~\ref{def stableblockfunctor}.  Note that $\overline{R\TD_{G,b}}$ is the zero functor if and only if $b$ has defect $0$. We say that two pairs $(G,b)$ and $(H,c)$ are {\em stably functorially equivalent over $R$} if the functors $\overline{R\TD_{G,b}}$ and $\overline{R\TD_{H,c}}$ are isomorphic. For a block algebra $kGb$, let $k(kGb)$ and $l(kGb)$ denote the number of irreducible ordinary characters and the number of irreducible Brauer characters of $b$, respectively.  

\begin{theorem}\label{thm stablefunequiv} Let $b$ be a block idempotent of $kG$ and let $c$ be a block idempotent of $kH$. 

\smallskip
{\rm (i)} The pairs $(G,b)$ and $(H,c)$ are stably functorially equivalent over $\FF$ if and only if the multiplicities of $S_{L,u,V}$ in $\FFTD_{G,b}$ and $\FFTD_{H,c}$ are the same for any simple diagonal $p$-permutation functor $S_{L,u,V}$ with $L\neq 1$. In this case, $(G,b)$ and $(H,c)$ are functorially equivalent over $\FF$ if and only if $l(kGb)=l(kHc)$.

\smallskip
{\rm (ii)} If the pairs $(G,b)$ and $(H,c)$ are stably functorially equivalent over $\FF$, then $b$ and $c$ have isomorphic defect groups and one has
\begin{align*}
k(kGb)-l(kGb)=k(kHc)-l(kHc)\,.
\end{align*}
\end{theorem}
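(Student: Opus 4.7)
\medskip
\noindent\textbf{Proof proposal.} My plan is to use the semisimplicity of $\Fppk{\FF}$ (from \cite{BoucYilmaz2022}) together with Theorem~\ref{thm semisimplicity} and the multiplicity formulas of \cite[Theorem~8.22]{BoucYilmaz2022}. For part (i), let $m_{L,u,V}(G,b)$ denote the multiplicity of $S_{L,u,V}$ in the decomposition of $\FFTD_{G,b}$ into simple diagonal $p$-permutation functors. Applying the quotient functor $\Fppk{\FF}\to\overline{\Fppk{\FF}}$ kills the simples with $L=1$ and, by Theorem~\ref{thm semisimplicity}, sends the remaining $S_{L,u,V}$ to pairwise non-isomorphic simples of $\overline{\Fppk{\FF}}$. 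Hence
\begin{equation*}
\overline{\FFTD_{G,b}}\cong \bigoplus_{L\neq 1} S_{L,u,V}^{\,m_{L,u,V}(G,b)},
\end{equation*}
so the semisimplicity of $\overline{\Fppk{\FF}}$ identifies stable functorial equivalence with the equality of all $L\neq 1$ multiplicities. The second statement of (i) reduces to identifying the simples with $L=1$: since $L=1$ forces $u=1$ and $V=\FF$, there is a single such isomorphism class $S_{1,1,\FF}$, and functorial equivalence becomes stable functorial equivalence plus $m_{1,1,\FF}(G,b)=m_{1,1,\FF}(H,c)$. The remaining step is to identify $m_{1,1,\FF}(G,b)=l(kGb)$, which I would extract from the explicit multiplicity formula of \cite[Theorem~8.22]{BoucYilmaz2022}.

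For part (ii), if $b$ has defect zero then $\overline{\FFTD_{G,b}}=0$ forces $c$ to have defect zero as well, and both identities in the statement hold trivially. Otherwise, the defect group $D$ of $b$ is characterised, up to isomorphism, as the maximal $L$ (by order) such that $m_{L,u,V}(G,b)\neq 0$ for some $(u,V)$, as in \cite[Theorem~10.5]{BoucYilmaz2022}; since $D\neq 1$, the witnessing simple has $L\neq 1$, so this datum is preserved by part~(i) and the defect groups of $b$ and $c$ are isomorphic.

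For the $k-l$ invariance, my plan is to write both $k(kGb)$ and $l(kGb)$ as explicit linear combinations of the $m_{L,u,V}(G,b)$ using the formulas of \cite[Theorem~8.22]{BoucYilmaz2022}. Combined with the identification $l(kGb)=m_{1,1,\FF}(G,b)$ established in part~(i), it will suffice to show that the coefficient of $m_{1,1,\FF}(G,b)$ in the expansion of $k(kGb)$ is also $1$; then
\begin{equation*}
k(kGb)-l(kGb) = \sum_{L\neq 1} \gamma_{L,u,V}\, m_{L,u,V}(G,b)
\end{equation*}
for certain universal coefficients $\gamma_{L,u,V}$, and the right-hand side is preserved by stable functorial equivalence by part~(i). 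I expect the main technical point to be this cancellation at $L=1$: it should be a direct check that the $S_{1,1,\FF}$-contribution to $k(kGb)$ equals its contribution to $l(kGb)$, which can be verified on a block of defect zero, where both counts equal $1$ and all higher $m_{L,u,V}$ vanish.
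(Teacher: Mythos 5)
Your proposal is correct and follows essentially the same route as the paper: part (i) via semisimplicity of $\overline{\Fppk{\FF}}$ (Theorem~\ref{thm semisimplicity}) together with the identification of the multiplicity of $S_{1,1,\FF}$ in $\FFTD_{G,b}$ with $l(kGb)$, and part (ii) by extracting the defect group and the quantity $k-l$ from the multiplicities as in the proofs of \cite[Theorem~10.5(ii),(iii)]{BoucYilmaz2022}. The only cosmetic remark is that your "check it on a block of defect zero" step is unnecessary once you have the explicit linear expansion of $k(kGb)$ from that reference; the coefficient of $m_{1,1,\FF}$ being $1$ is read off directly rather than verified on an example.
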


We also consider the blocks with abelian defect groups and Frobenius inertial quotient.

\begin{theorem}\label{thm stableequivalence}
Let $G$ be a finite group, $b$ a block idempotent of $kG$ with a nontrivial abelian defect group $D$. Let $E=N_G(D,e_D)/C_G(D)$ denote the inertial quotient of $b$. Suppose that $E$ acts freely on $D\setminus \{1\}$. Then there exists a stable functorial equivalence over $\FF$ between $(G,b)$ and $(D\rtimes E,1)$. 
\end{theorem}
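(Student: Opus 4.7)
The plan is to apply Theorem~\ref{thm stablefunequiv}(i), which reduces the problem to showing that for every simple diagonal $p$-permutation functor $S_{L,u,V}$ with $L\neq 1$, the multiplicity of $S_{L,u,V}$ in $\FFTD_{G,b}$ equals the multiplicity of $S_{L,u,V}$ in $\FFTD_{D\rtimes E,1}$. By the decomposition formula of \cite[Theorem~8.22]{BoucYilmaz2022}, each such multiplicity is a sum indexed by $G$-conjugacy classes of $b$-Brauer pairs $(P,e_P)$, in which the term attached to $(P,e_P)$ is built from the fusion/inertia data of $(P,e_P)$ and from the ordinary and Brauer character theory of the local Brauer correspondent block $kC_G(P)e_P$. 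When $L\neq 1$, only \emph{nontrivial} subpairs contribute, so one only has to compare purely local invariants of the two blocks.

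The key step is to exploit the Frobenius hypothesis on $E$ to force these local invariants to coincide. For any nontrivial $P\leq D$, any element of $E$ centralizing $P$ must fix some nonidentity element of $P$; since $E$ acts freely on $D\setminus\{1\}$, this gives $C_E(P)=1$. Consequently the inertial quotient of $e_P$ at its own maximal Brauer pair (which is contained in the image of $N_G(D,e_D)\cap C_G(P)$ in $E$) is trivial. Since $D$ is abelian, the defect group of $e_P$ is $C_D(P)=D$, so $e_P$ is a block with abelian defect group and trivial inertial quotient, hence a nilpotent block by Broué--Puig. By Puig's theorem on nilpotent blocks, $kC_G(P)e_P$ is then Morita equivalent to $kD$; in particular $l(kC_G(P)e_P)=1$, $k(kC_G(P)e_P)=|D|$, and the action of $N_E(P)$ on its ordinary and Brauer characters is canonically determined by the action of $N_E(P)$ on $D$. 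Exactly the same analysis applied to $(D\rtimes E,1)$ yields identical local data at each corresponding nontrivial subpair. Moreover, the fusion system of $b$ on $D$ is generated by the $E$-action, so it agrees with the fusion system of the principal block of $D\rtimes E$.

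Plugging these term-by-term identifications into the multiplicity formula of \cite[Theorem~8.22]{BoucYilmaz2022} gives, for every $L\neq 1$ and every $(u,V)$, equality of multiplicities of $S_{L,u,V}$ in $\FFTD_{G,b}$ and in $\FFTD_{D\rtimes E,1}$. Theorem~\ref{thm stablefunequiv}(i) then produces the desired stable functorial equivalence. The main obstacle I expect is the bookkeeping of this term-by-term matching: one has to verify carefully that every ingredient entering the formula at each nontrivial $b$-Brauer pair --- the action of $N_E(P)$ on the characters of $kC_G(P)e_P$, the restriction to $p$-regular elements of $\langle u\rangle L$, and the way these assemble into a class function on the relevant wreath-type product --- is genuinely intrinsic to $(D,E)$ and hence independent of whether one computes inside $G$ or inside $D\rtimes E$. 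The Frobenius hypothesis is exactly what is needed to collapse this global-to-local reduction, since it kills every potentially nontrivial local inertial quotient and forces the local blocks to be nilpotent of the same type on both sides.
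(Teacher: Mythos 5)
Your overall strategy is the same as the paper's: reduce via Theorem~\ref{thm stablefunequiv}(i) to comparing multiplicities of the $S_{L,u,V}$ with $L\neq 1$, and use the Frobenius hypothesis to show that $kC_G(P)e_P$ is nilpotent for every nontrivial $P\le D$ (the paper quotes this directly from Linckelmann, Theorem~10.5.2; your derivation via $C_E(P)=1$ and Brou\'e--Puig is a correct way to see it), so that $l(ke_PC_G(P))=1$ and the local factor $\FF\Proj(ke_PC_G(P),u)$ collapses to a one-dimensional contribution in the multiplicity formula of [BY22, Theorem~8.22].

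However, what you dismiss as ``bookkeeping'' is in fact the mathematical core of the argument, and your proposal does not carry it out. After the collapse, the multiplicity of $S_{L,u,V}$ in $\FFTD_{G,b}$ is a sum of dimensions of fixed points $V^{\Out(L,u)_{\overline{(P,e,\pi)}}}$ indexed by $[G\dom\calP_b(G,L,u)/\Aut(L,u)]$, while the multiplicity in $\FFTD_{D\rtimes E}$ is computed by a \emph{differently shaped} formula ([BY22, Corollary~7.4]), a sum of $\dim V^{N_{D\rtimes E}(P,s)}$ over conjugacy classes of pairs $(P,s)$ with $(\tilde P,\tilde s)\cong(L,u)$. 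To conclude, one must (a) construct an explicit bijection between these two indexing sets, using the equality of fusion systems $\calF_b=\calF_{D\rtimes E}(D)$ to replace $G$-conjugations by $D\rtimes E$-conjugations and to produce, from each triple $(P,e,\pi)$, a $p'$-element $s\in N_{D\rtimes E}(P)$ with $\pi i_u\pi^{-1}=i_s$; and (b) verify that under this bijection the stabilizer $\Out(L,u)_{\overline{(P,e,\pi)}}$ equals the image of $N_{D\rtimes E}(P,s)$ in $\Out(L,u)$, so that the corresponding fixed-point spaces have the same dimension. These are exactly Lemma~\ref{lem bijection} and Lemma~\ref{lem stabilizers} of the paper, and neither is automatic: well-definedness, injectivity and surjectivity of the map each require a short argument with inner automorphisms and $\Aut(L,u)$. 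Your proposal asserts the conclusion of this matching without proving it (and also brings in irrelevant data such as $k(kC_G(P)e_P)$ and ordinary characters, which do not enter the multiplicity formula), so as written it has a genuine gap precisely where the paper does its real work.
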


\begin{corollary}\label{cor p-permutationequivalence}
Assume the notation in Theorem~\ref{thm stableequivalence}.

\smallskip
{\rm (i)} There exists a functorial equivalence over $\FF$ between $(G,b)$ and $(D\rtimes E,1)$ if and only if $l(kGb)=l(k(D\rtimes E))$. 

\smallskip
{\rm (ii)} Suppose that $E$ is abelian.  Then there exists a functorial equivalence over $\FF$ between $(G,b)$ and $(D\rtimes E,1)$ if and only if $(G,b)$ and $(D\rtimes E,1)$ are $p$-permutation equivalent.
\end{corollary}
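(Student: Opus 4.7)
For part (i), the plan is immediate from results already in place. Theorem~\ref{thm stableequivalence} provides a stable functorial equivalence over $\FF$ between $(G,b)$ and $(D\rtimes E,1)$, so Theorem~\ref{thm stablefunequiv}(i) applies and says that, within this stable class, a functorial equivalence exists if and only if the numbers of irreducible Brauer characters coincide. This delivers the ``if'' direction. The ``only if'' direction uses only that a functorial equivalence over $\FF$ preserves the invariant $l$, which is already established in \cite[Theorem~10.5]{BoucYilmaz2022}.

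For part (ii), I would argue the two directions separately. The ``if'' direction, that a $p$-permutation equivalence between $(G,b)$ and $(D\rtimes E,1)$ induces a functorial equivalence over $\FF$, should be obtainable from \cite{BoucYilmaz2022}: a $p$-permutation equivalence is the datum of mutually inverse classes in the relevant Grothendieck groups of diagonal $p$-permutation bimodules, and tensoring with these classes provides mutually inverse natural transformations between $\FFTD_{G,b}$ and $\FFTD_{D\rtimes E,1}$. I would simply cite (or very briefly verify) this fact.

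For the substantive ``only if'' direction, assume a functorial equivalence over $\FF$ and use part (i) to conclude $l(kGb)=l(k(D\rtimes E))$. Because $D\rtimes E$ is a Frobenius group with $p$-group kernel $D$ and $p'$-complement $E$, one has $l(k(D\rtimes E))=|E|$, so $l(kGb)=|E|$. I would then invoke a structural theorem on inertial blocks (Puig's theory, or the more recent formulations of Boltje--Kessar--Linckelmann for blocks with abelian defect groups and Frobenius inertial quotient) to upgrade this numerical coincidence, under the standing hypotheses that $D$ is abelian and $E$ is abelian acting freely on $D\setminus\{1\}$, to a splendid Morita equivalence between $kGb$ and $k(D\rtimes E)$; any such equivalence is \emph{a fortiori} a $p$-permutation equivalence. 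The main obstacle is precisely this last step: the semisimplicity of $\overline{\Fppk{\FF}}$ from Theorem~\ref{thm semisimplicity} together with Theorem~\ref{thm stableequivalence} pin down the functor-categorical picture, but the lift from the numerical equality $l(kGb)=|E|$ to a genuine bimodule equivalence is block-theoretic and depends on identifying the correct external input for abelian defect with abelian Frobenius inertial quotient.
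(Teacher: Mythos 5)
Your proposal is correct and follows essentially the same route as the paper: part (i) is exactly Theorem~\ref{thm stablefunequiv}(i) combined with Theorem~\ref{thm stableequivalence}, and for part (ii) the paper likewise reduces to the numerical condition $l(kGb)=l(k(D\rtimes E))=|E|$ and then invokes the external block-theoretic input, namely \cite[Theorem~10.5.10]{linckelmann2018}, which under the standing hypotheses with $E$ abelian yields the equivalence between $l(kGb)=|E|$ and the existence of a $p$-permutation equivalence between $(G,b)$ and $(D\rtimes E,1)$. The only thing missing from your write-up is this precise reference; whether one phrases the conclusion via a splendid Morita equivalence or directly as a $p$-permutation equivalence is immaterial, since either suffices for the statement.
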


In Section~\ref{sec prelims} we recall diagonal $p$-permutation functors and functorial equivalences between blocks. In Section~\ref{sec stablefunctors} we introduce the category of stable diagonal $p$-permutation functors and prove Theorem~\ref{thm semisimplicity}. In Section~\ref{sec stableequivalences} we introduce the notion of stable functorial equivalences between blocks and prove Theorem~\ref{thm stablefunequiv}. Finally, in Section~\ref{sec frobeniusinertialquotient} we prove Theorem~\ref{thm stableequivalence} and Corollary~\ref{cor p-permutationequivalence}.

\section{Preliminaries}\label{sec prelims}
{\rm (a)} Let $(P,s)$ be a pair where $P$ is a $p$-group and $s$ is a generator of a $p'$-group acting on $P$. We write $P\langle s\rangle := P\rtimes \langle s\rangle$ for the corresponding semi-direct product. We say that two pairs $(P,s)$ and $(Q,t)$ are {\em isomorphic} and write $(P,s)\cong (Q,t)$, if there is a group isomorphism $f: P\langle s\rangle \to Q\langle t\rangle$ that sends $s$ to a conjugate of $t$.  We set $\Aut(P,s)$ to be the group of the automorphisms of the pair $(P,s)$ and $\Out(P,s)=\Aut(P,s)/\Inn(P\langle s\rangle)$. Recall from \cite{BoucYilmaz2020} that a pair $(P,s)$ is called a {\em $\DD$-pair}, if $C_{\langle s\rangle}(P)=1$.

\smallskip
{\rm (b)} Let $G$ and $H$ be finite groups. We denote by $T(G)$ the Grothendieck group of $p$-permutation $kG$-modules and by $\TD(G,H)$ the Grothendieck group of $p$-permutation $(kG,kH)$-bimodules whose indecomposable direct summands have twisted diagonal vertices. Let $Rpp_k^\Delta$ denote the following category:
\begin{itemize}
\item objects: finite groups.
\item $\mathrm{Mor}_{Rpp_k^\Delta}(G,H) = R\otimes_{\ZZ}T^{\Delta}(H,G)=RT^{\Delta}(H,G)$.
\item composition is induced from the tensor product of bimodules.
\item $\mathrm{Id}_{G}=[kG]$.
\end{itemize} 
An $R$-linear functor from $Rpp_k^\Delta$ to $\lMod{R}$ is called a \textit{diagonal $p$-permutation functor} over $R$. Together with natural transformations, diagonal $p$-permutation functors form an abelian category $\Fppk{R}$.

\smallskip
{\rm (c)} Recall from \cite{BoucYilmaz2022} that the category $\Fppk{\FF}$ is semisimple. Moreover, the simple diagonal $p$-permutation functors, up to isomorphism, are parametrized by the isomorphism classes of triples $(L,u,V)$ where $(L,u)$ is a $\DD$-pair, and $V$ is a simple $\FF\Out(L,u)$-module (see \cite[Sections~6~and~7]{BoucYilmaz2022} for more details on simple functors).

\smallskip
{\rm (d)} Let $G$ be a finite group and $b$ a block idempotent of $kG$.  Recall from \cite{BoucYilmaz2022} that the block diagonal $p$-permutation functor $R\TD_{G,b}$ is defined as
\begin{align*}
R\TD_{G,b}: Rpp_k^{\Delta}&\to \lMod{R}\\
H&\mapsto RT^\Delta(H,G)\otimes_{kG} kGb\,.
\end{align*}
See \cite[Section~8]{BoucYilmaz2022} for the decomposition of $\FFTD_{G,b}$ in terms of the simple functors $S_{L,u,V}$. 

\smallskip
{\rm (e)} Let $b$ be a block idempotent of $kG$ and let $c$ be a block idempotent of~$kH$. We say that the pairs $(G,b)$ and $(H,c)$ are {\em functorially equivalent} over $R$, if the corresponding diagonal $p$-permutation functors $R\TD_{G,b}$ and $R\TD_{H,c}$ are isomorphic in $\Fppk{R}$ (\cite[Definition~10.1]{BoucYilmaz2022}).  By \cite[Lemma~10.2]{BoucYilmaz2022} the pairs $(G,b)$ and $(H,c)$ are functorially equivalent over~$R$ if and only if there exists $\omega\in bR\TD(G,H)c$ and $\sigma\in cR\TD(H,G)b$ such that
\begin{align*}
\omega \cdot_G \sigma = [kGb] \quad \text{in} \quad b\RTD(G,G)b \quad \text{and} \quad \sigma \cdot_H \omega = [kHc] \quad \text{in} \quad c\RTD(H,H)c \,.
\end{align*}

\section{Stable diagonal $p$-permutation functors}\label{sec stablefunctors}

In this section we introduce the category of stable diagonal $p$-permutation functors. 

For a finite group $G$, let $P(G)$ denote the subgroup of $T(G)$ generated by the indecomposable projective $kG$-modules. Let also $\overline{T(G)}$ denote the quotient group $T(G)/P(G)$. For $X\in T(G)$, we denote by $\overline{X}$ the image of $X$ in $\overline{T(G)}$. If $H$ is another finite group, we define $P(G,H)$ and $\overline{\TD(G,H)}$ similarly. 

\begin{lemma}
For finite groups $G$ and $H$ one has $P(G,H)=T^\Delta(G,1)\circ T^\Delta(1,H)$ 
\end{lemma}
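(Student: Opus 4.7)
The plan is to identify $T^\Delta(G,1)$ and $T^\Delta(1,H)$ explicitly and then read off the composition. A twisted diagonal subgroup of $G\times 1$ has both projections injective; since the second factor is trivial, the projection onto it forces the subgroup itself to be trivial. Consequently an indecomposable $p$-permutation $(kG,k)$-bimodule, i.e.\ an indecomposable $p$-permutation $kG$-module, represents a class in $T^\Delta(G,1)$ if and only if it has trivial vertex, which is equivalent to being projective. Hence $T^\Delta(G,1)=P(G)$, and the symmetric argument gives $T^\Delta(1,H)=P(H)$ (viewed through right $kH$-modules).

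The composition in $Rpp_k^\Delta$ over the trivial group is simply the external tensor product over $k$: for a left $kG$-module $M$ and a right $kH$-module $N$, their composite is $M\otimes_k N$ regarded as a $(kG,kH)$-bimodule. Therefore $T^\Delta(G,1)\circ T^\Delta(1,H)$ is the subgroup generated by the classes $[P\otimes_k Q]$ where $P$ runs over indecomposable projective $kG$-modules and $Q$ over indecomposable projective right $kH$-modules. Since $P\otimes_k Q$ is a direct summand of $kG\otimes_k kH=k(G\times H)$, it is a projective $(kG,kH)$-bimodule, so these classes all lie in $P(G,H)$. This gives the inclusion $\supseteq$.

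For the reverse inclusion one needs to show that every indecomposable projective $(kG,kH)$-bimodule has the form $P\otimes_k Q$ for indecomposable projectives $P$ over $kG$ and $Q$ over $kH$. Starting from primitive orthogonal idempotent decompositions $1_{kG}=\sum_i e_i$ in $kG$ and $1_{kH}=\sum_j f_j$ in $kH$, one obtains an orthogonal idempotent decomposition $1_{k(G\times H)}=\sum_{i,j}(e_i\otimes f_j)$ in $kG\otimes_k kH=k(G\times H)$. Each $e_i\otimes f_j$ remains primitive because its corner ring $(e_i\otimes f_j)k(G\times H)(e_i\otimes f_j)=(e_i kGe_i)\otimes_k(f_j kHf_j)$ is local; indeed, the tensor product over the algebraically closed field $k$ of two finite-dimensional local $k$-algebras is local. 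It follows that $k(G\times H)(e_i\otimes f_j)=(kGe_i)\otimes_k(kHf_j)$ exhausts the indecomposable projective $k(G\times H)$-modules, proving $P(G,H)\subseteq T^\Delta(G,1)\circ T^\Delta(1,H)$. The only substantive ingredient is this standard fact on tensor products of local algebras over an algebraically closed field; no real obstacle arises.
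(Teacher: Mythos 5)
Your proof is correct and follows essentially the same route as the paper's: the paper simply cites the fact that the indecomposable projective $k(G\times H)$-modules are exactly the $P\otimes_k Q$ with $P$, $Q$ indecomposable projective over $kG$ and $kH$, which is precisely what you establish (together with the easy identifications $T^\Delta(G,1)=P(G)$, $T^\Delta(1,H)=P(H)$ and the description of composition over the trivial group as $\otimes_k$). Your extra detail on primitive idempotents and local corner rings, using that $k$ is algebraically closed, is a valid justification of that cited fact.
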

\begin{proof}
This follows from the fact that the projective indecomposable $k(G\times H)$-modules are of the form $P\otimes_k Q$ where $P$ and $Q$ are projective indecomposable $kG$ and $kH$-modules, respectively.
\end{proof}

\begin{definition}
Let $\overline{R pp_k^\Delta}$ denote the following category:
\begin{itemize}
\item objects: finite groups.
\item $\mathrm{Mor}_{\overline{Rpp_k^\Delta}}(G,H) = R\otimes_{\ZZ}\overline{T^{\Delta}(H,G)}=\overline{R\TD(H,G)}$.
\item composition is induced from the tensor product of bimodules.
\item $\mathrm{Id}_{G}=\overline{[kG]}$.
\end{itemize} 
\end{definition}

\begin{definition}
An $R$-linear functor $\overline{Rpp_k^\Delta}\to \lMod{R}$ is called a {\em stable diagonal $p$-permutation functor} over $R$.  Together with natural transformations, stable diagonal $p$-permutation functors form an abelian category $\overline{\Fppk{R}}$.
\end{definition}

\begin{Remark}
The functor
\begin{align*}
\Gamma: \overline{\Fppk{R}}\to \Fppk{R}
\end{align*}
obtained by composition with the projection $Rpp_k^\Delta\to \overline{Rpp_k^\Delta}$ gives a description of $\overline{\Fppk{R}}$ as a full subcategory of $\Fppk{R}$. Moreover, $\Gamma$ has a left adjoint $\Sigma$, constructed as follows: If $F$ is a diagonal $p$-permutation functor over $R$ and $G$ is a finite group, set
\begin{align*}
\overline{F}(G):=F(G)/\RTD(G,\textbf{1})F(\textbf{1})\,.
\end{align*}
Then $\overline{F}$ is a diagonal $p$-permutation functor, equal to the quotient of $F$ by the subfunctor generated by $F(\textbf{1})$. Obviously, $\overline{F}$ vanishes at the trivial group, so it is a stable diagonal $p$-permutation functor. The functor $\Sigma: F\mapsto \overline{F}$ is a left adjoint to the above functor $\Gamma$. In particular, $\overline{\Fppk{R}}$ is a reflective subcategory of $\Fppk{R}$.
\end{Remark}

Let $G$ be a finite group. Recall that by \cite[Corollary~8.23(i)]{BoucYilmaz2022}, the multiplicity of the simple diagonal $p$-permutation functor $S_{1,1,\FF}$ in the representable functor $\FFTD(-,G)$ is equal to the number $l(kG)$ of the isomorphism classes of simple $kG$-modules.  Let $\calI(-,G)$ denote the sum of simple subfunctors of $\FFTD(-,G)$ isomorphic to $S_{1,1,\FF}$. Let also $\FF\Proj(-,G)$ denote the subfunctor of $\FFTD(-,G)$ sending a finite group $H$ to $\FF\Proj(H,G)$.  

\begin{lemma}\label{lem projectivesubfunctor}
The subfunctors $\calI(-,G)$ and $\FF\Proj(-,G)$ of the representable functor $\FFTD(-,G)$ are isomorphic.
\end{lemma}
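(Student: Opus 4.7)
The plan is to show that both $\calI(-,G)$ and $\FF\Proj(-,G)$ are isomorphic to the same functor $S_{1,1,\FF}^{\oplus l(kG)}$. The identification $\calI(-,G)\cong S_{1,1,\FF}^{\oplus l(kG)}$ is immediate from the definition of $\calI(-,G)$ as the $S_{1,1,\FF}$-isotypic component of $\FFTD(-,G)$ combined with the multiplicity formula of \cite[Corollary~8.23(i)]{BoucYilmaz2022}. It therefore suffices to establish the analogous isomorphism for $\FF\Proj(-,G)$.

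First I would verify that $\FF\Proj(-,G)$ really is a subfunctor of $\FFTD(-,G)$. By the preceding lemma every element of $\FF\Proj(H,G)$ factors as $\beta\circ\gamma$ with $\beta\in\FFTD(H,\textbf{1})$ and $\gamma\in\FFTD(\textbf{1},G)$, so post-composition by any $\delta\in\FFTD(H',H)$ yields $(\delta\circ\beta)\circ\gamma\in\FF\Proj(H',G)$, which is the required closure under the action.

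The crucial step is to identify the representable functor $\FFTD(-,\textbf{1})$ with $S_{1,1,\FF}$. By Yoneda and semisimplicity of $\Fppk{\FF}$, the multiplicity of $S_{L,u,V}$ as a summand of $\FFTD(-,\textbf{1})$ equals $\dim_\FF\Hom(\FFTD(-,\textbf{1}),S_{L,u,V})=\dim_\FF S_{L,u,V}(\textbf{1})$. For $L\neq 1$ this vanishes, since $S_{L,u,V}$ is a stable diagonal $p$-permutation functor; and the multiplicity formula \cite[Corollary~8.23(i)]{BoucYilmaz2022} applied with $G=\textbf{1}$ gives $\dim S_{1,1,\FF}(\textbf{1})=l(k)=1$. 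Hence $\FFTD(-,\textbf{1})\cong S_{1,1,\FF}$.

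Finally, note that $\FFTD(\textbf{1},G)=\FF\Proj(\textbf{1},G)$ because the only twisted diagonal subgroup of $\textbf{1}\times G$ is the trivial one, so that space has dimension $l(kG)$. Choosing a basis $e_1,\ldots,e_{l(kG)}$, I would consider the natural transformation
\begin{equation*}
\Phi\colon \FFTD(-,\textbf{1})^{\oplus l(kG)}\longrightarrow \FFTD(-,G),\qquad (f_1,\ldots,f_{l(kG)})\longmapsto \sum_i f_i\circ e_i,
\end{equation*}
whose image is precisely $\FF\Proj(-,G)$ by the preceding lemma. Since the source is isomorphic to $S_{1,1,\FF}^{\oplus l(kG)}$, semisimplicity forces $\FF\Proj(-,G)\cong S_{1,1,\FF}^{\oplus m}$ for some $m\le l(kG)$; evaluating at $H=\textbf{1}$, where $\dim\FF\Proj(\textbf{1},G)=l(kG)$ while $\dim S_{1,1,\FF}^{\oplus m}(\textbf{1})=m$, forces $m=l(kG)$. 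The hardest part of the argument is really just the identification $\FFTD(-,\textbf{1})\cong S_{1,1,\FF}$, which ultimately rests on the vanishing of $S_{L,u,V}(\textbf{1})$ for $L\neq 1$.
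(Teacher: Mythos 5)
Your proof is correct and follows essentially the same route as the paper: both arguments rest on the factorization $\FF\Proj(-,G)=\FFTD(-,\textbf{1})\circ\FFTD(\textbf{1},G)$ together with the vanishing of $S_{L,u,V}(\textbf{1})$ for $L\neq 1$ and a dimension count. The only cosmetic difference is that you identify $\FFTD(-,\textbf{1})\cong S_{1,1,\FF}$ via Yoneda and then only need to compare dimensions at $H=\textbf{1}$, whereas the paper exhibits the containment $\FF\Proj(-,G)\subseteq\calI(-,G)$ directly and compares dimensions at every $H$ (using $\dim_\FF S_{1,1,\FF}(H)=l(kH)$).
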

\begin{proof}
For finite groups $G$ and $H$, the number of isomorphism classs of projective indecomposable $k(G\times H)$-modules, or equivalently, the number of isomorphism classes of simple $k(G\times H)$-modules is equal to the number of conjugacy classes of $p'$-elements of $G\times H$. Hence the $\FF$-dimension of the evaluation $\FF\Proj(H,G)$ is equal to
\begin{align*}
l(k(G\times H))=l(kG)l(kH)
\end{align*}
which is equal to the $\FF$-dimension of $l(kG)S_{1,1,\FF}(H)$, and hence to the $\FF$-dimension of $\calI(H,G)$. 

Note that $\FF\Proj(-,G)$ is isomorphic to the functor
\begin{align*}
\FFTD(-,1)\circ\FFTD(1,G)\,.
\end{align*}
Moreover $S_{L,u,V}(1)=0$ for $L\neq 1$, and hence $\FFTD(1,G)=\calI(1,G)$. Therefore, one has
\begin{align*}
\FF\Proj(-,G)\cong \FFTD(-,1)\circ\FFTD(1,G) = \FFTD(-,1)\circ\calI(1,G) \subseteq \calI(-,G)\,.
\end{align*}
Since the $\FF$-dimensions of $\FF\Proj(H,G)$ and $\calI(H,G)$ are the same for any finite group $H$, it follows that $\FF\Proj(-,G)\cong \calI(-,G)$. 
\end{proof}

{\em Proof of Theorem~\ref{thm semisimplicity}:}
For a finite group $G$, the representable diagonal $p$-permutation functor $\FFTD(-,G)$ decomposes as a direct sum of simple functors $S_{L,u,V}$, and hence we have
\begin{align*}
\FFTD(-,G)\cong \calI(-,G)\bigoplus_{\substack{(L,u,V)\\ L\neq 1}} S_{L,u,V}^{m_{L,u,V}}\,,
\end{align*}
for some nonnegative integers $m_{L,u,V}$, where $(L,u,V)$ runs over a set of isomorphism classes of $\DD$-pairs $(L,u)$ with $L\neq 1$, and simple $\FF\Out(L,u)$-modules $V$.  By Lemma~\ref{lem projectivesubfunctor}, the representable stable diagonal $p$-permutation functor $\overline{\FFTD(-,G)}$ is isomorphic to the direct sum 
\begin{align*}
\bigoplus_{\substack{(L,u,V)\\ L\neq 1}} S_{L,u,V}^{m_{L,u,V}}
\end{align*}
of simple diagonal $p$-permutation functors,
and each of these simple functors is a simple stable diagonal $p$-permutation functor. Since the functor category $\overline{\Fppk{\FF}}$ is generated by the representable functors the result follows. 
\qed

\section{Stable functorial equivalences}\label{sec stableequivalences}
Let $G$ and $H$ be finite groups.

\begin{definition}\label{def stableblockfunctor}
Let $b$ a block idempotent of $kG$.  The stable diagonal $p$-permutation functor $\overline{R\TD_{G,b}}$ is defined as
\begin{align*}
\overline{R\TD_{G,b}}: \overline{Rpp_k^{\Delta}}&\to \lMod{R}\\
H&\mapsto \overline{RT^\Delta(H,G)\otimes_{kG} kGb}\,.
\end{align*}
\end{definition}

See Section~\ref{sec prelims}{\rm (d)} for the definition of $R\TD_{G,b}$ and note that $\overline{R\TD_{G,b}}=\Sigma\left(R\TD_{G,b}\right)$.

\begin{definition}
Let $b$ be a block idempotent of $kG$ and let $c$ be a block idempotent of $kH$. We say that the pairs $(G,b)$ and $(H,c)$ are {\em stably functorially equivalent} over $R$, if their corresponding stable diagonal $p$-permutation functors $\overline{R\TD_{G,b}}$ and $\overline{R\TD_{H,c}}$ are isomorphic in $\overline{\Fppk{R}}$.
\end{definition}

\begin{lemma}
Let $b$ be a block idempotent of $kG$ and let $c$ be a block idempotent of $kH$. 

\smallskip
{\rm (a)} The pairs $(G,b)$ and $(H,c)$ are stably functorially equivalent over $R$ if and only if there exists $\omega\in bR\TD(G,H)c$ and $\sigma\in cR\TD(H,G)b$ such that
\begin{align*}
\omega \cdot_G \sigma = [kGb]+[P] \quad \text{in} \quad b\RTD(G,G)b \quad \text{and} \quad \sigma \cdot_H \omega = [kHc]+[Q] \quad \text{in} \quad c\RTD(H,H)c 
\end{align*}
for some $P\in R\Proj(kGb,kGb)$ and $Q\in R\Proj(kHc,kHc)$. 

\smallskip
{\rm (b)} If the pairs $(G,b)$ and $(H,c)$ are functorially equivalent over $R$, then they are also stably functorially equivalent over $R$.
\end{lemma}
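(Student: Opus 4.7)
The plan is to mimic the proof of \cite[Lemma~10.2]{BoucYilmaz2022} but working inside the quotient category $\overline{\Fppk{R}}$. Observe first that $R\TD_{G,b}$ is the direct summand of the representable functor $R\TD(-,G) = \mathrm{Mor}_{Rpp_k^\Delta}(G,-)$ cut off by the idempotent endomorphism $[kGb] \in R\TD(G,G)$ acting by composition, and similarly $R\TD_{H,c}$ is the corresponding summand of $R\TD(-,H)$. Since $\overline{Rpp_k^\Delta}$ has the same objects as $Rpp_k^\Delta$, after passing to the quotient $\overline{R\TD_{G,b}}$ and $\overline{R\TD_{H,c}}$ are the analogous direct summands of $\overline{R\TD(-,G)}$ and $\overline{R\TD(-,H)}$.

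Applying the enriched Yoneda lemma in $\overline{Rpp_k^\Delta}$ and restricting to these summands yields natural identifications
\begin{align*}
\mathrm{Hom}_{\overline{\Fppk{R}}}(\overline{R\TD_{H,c}}, \overline{R\TD_{G,b}}) &\cong \overline{bR\TD(G,H)c}, \\
\mathrm{Hom}_{\overline{\Fppk{R}}}(\overline{R\TD_{G,b}}, \overline{R\TD_{H,c}}) &\cong \overline{cR\TD(H,G)b},
\end{align*}
where the bars on the right denote the quotients by the subgroups of elements factoring through the trivial group. By the initial lemma of Section~\ref{sec stablefunctors}, these subgroups are $bR\Proj(G,H)c = R\Proj(kGb,kHc)$ and $cR\Proj(H,G)b = R\Proj(kHc,kGb)$ respectively. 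Under this identification, composition of natural transformations corresponds to the products $\cdot_G$ and $\cdot_H$ of bimodules read modulo projectives, and the identity endomorphism of $\overline{R\TD_{G,b}}$ corresponds to the class of $[kGb]$.

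Part (a) then becomes a direct translation: a stable functorial equivalence is a pair of mutually inverse isomorphisms in $\overline{\Fppk{R}}$, which amounts to classes $\overline{\omega} \in \overline{bR\TD(G,H)c}$ and $\overline{\sigma} \in \overline{cR\TD(H,G)b}$ with $\overline{\omega \cdot_G \sigma} = \overline{[kGb]}$ and $\overline{\sigma \cdot_H \omega} = \overline{[kHc]}$, that is, admitting lifts $\omega$, $\sigma$ satisfying $\omega \cdot_G \sigma = [kGb]+[P]$ and $\sigma \cdot_H \omega = [kHc]+[Q]$ for some $P \in R\Proj(kGb,kGb)$ and $Q \in R\Proj(kHc,kHc)$. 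Part (b) is then immediate: by \cite[Lemma~10.2]{BoucYilmaz2022} a functorial equivalence furnishes $\omega$ and $\sigma$ with $\omega \cdot_G \sigma = [kGb]$ and $\sigma \cdot_H \omega = [kHc]$, so taking $P = 0 = Q$ in part (a) yields a stable functorial equivalence; equivalently, applying the left adjoint $\Sigma$ of the Remark in Section~\ref{sec stablefunctors} to the given isomorphism $R\TD_{G,b} \cong R\TD_{H,c}$ produces an isomorphism $\overline{R\TD_{G,b}} \cong \overline{R\TD_{H,c}}$.

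The main obstacle is the precise verification of the Yoneda identifications above in the quotient category and their compatibility with the idempotent cuts by $b$ and $c$. This rests on the fact, recorded at the start of Section~\ref{sec stablefunctors}, that the subgroup of morphisms factoring through the trivial group in $\mathrm{Mor}_{Rpp_k^\Delta}(H,G)$ is exactly $R\Proj(H,G)$; the idempotents $[kGb]$ and $[kHc]$ act by composition on both sides of the quotient and so commute with the projection, which produces the bar groups on the right in the form stated.
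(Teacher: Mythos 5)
Your proposal is correct and follows essentially the same route as the paper: both reduce part (a) to the Yoneda lemma applied in the quotient category $\overline{Rpp_k^\Delta}$, identifying the relevant Hom-groups with $\overline{bR\TD(G,H)c}$ and $\overline{cR\TD(H,G)b}$ and noting that the kernel of the quotient map is the span of the projective bimodules, and both obtain (b) immediately (the paper says ``(b) is clear,'' which matches your observation that one may take $P=Q=0$ or apply the left adjoint $\Sigma$). Your write-up merely spells out the details the paper leaves implicit.
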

\begin{proof}
By the Yoneda lemma, the $(G,b)$ and $(H,c)$ are stably functorially equivalent over $R$ if and only if there exists $\overline{\omega}\in \overline{bR\TD(G,H)c}$ and $\overline{\sigma}\in \overline{cR\TD(H,G)b}$ such that
\begin{align*}
\overline{\omega} \cdot_G \overline{\sigma} = \overline{[kGb]} \quad \text{in} \quad \overline{b\RTD(G,G)b} \quad \text{and} \quad \overline{\sigma} \cdot_H \overline{\omega} = \overline{[kHc]} \quad \text{in} \quad \overline{c\RTD(H,H)c} \,.
\end{align*}
Hence {\rm (a)} follows and {\rm (b)} is clear. 
\end{proof}

{\em Proof of Theorem~\ref{thm stablefunequiv}:}
{\rm (i)} The first statement follows from Theorem~\ref{thm semisimplicity} and the second statement follows since the multiplicity of the simple functor $S_{1,1,\FF}$ in $\FFTD_{G,b}$ is equal to $l(kGb)$.

\smallskip
{\rm (ii)} The first statement follows from the proof of \cite[Theorem~10.5(iii)]{BoucYilmaz2022} and the second statement follows from the proof of  \cite[Theorem~10.5(ii)]{BoucYilmaz2022}. 
\qed

\section{Blocks with Frobenius inertial quotient}\label{sec frobeniusinertialquotient}

\smallskip
{\rm (a)} Let $G$ be a finite group, $b$ a block idempotent of $kG$ with a nontrivial abelian defect group $D$. Let $(D,e_D)$ be a maximal $b$-Brauer pair and let $E=N_G(D,e_D)/C_G(D)$ denote the inertial quotient of $b$. Suppose that $E$ acts freely on $D\setminus \{1\}$.

The condition that the action of $E$ on $D\setminus \{1\}$ is free is equivalent to requiring that $D\rtimes E$ is a Frobenius group. Let $\calF_b$ be the fusion system of $b$ with respect to $(D,e_D)$. Then $\calF_b$ is equal to the fusion system $\calF_{D\rtimes E}(D)$ on $D$ determined by $D\rtimes E$.

\smallskip
{\rm (b)} Let $S_{L,u,V}$ be a simple diagonal $p$-permutation functor such that $L$ is nontrivial and isomorphic to a subgroup of $D$. Recall that by \cite[Theorem~8.22]{BoucYilmaz2022} the multiplicity of $S_{L,u,V}$ in $\FFTD_{G,b}$ is equal to the $\FF$-dimension of
\begin{align*}
\bigoplus_{(P,e_P) \in [\calF_b]} \bigoplus_{\pi \in [N_G(P,e_P)\dom\calP_{(P,e_P)}(L,u)/\Aut(L,u)]} \FF\Proj(ke_PC_G(P),u)\otimes_{\Aut(L,u)_{\overline{(P,e_P,\pi)}}} V\,,
\end{align*}
where $[\calF_b]$ denotes a set of isomorphism classes of objects in $\calF_b$,  $\calP_{(P,e_P)}$ is the set of group isomorphisms $\pi:L\to P$ with $\pi i_u \pi^{-1}\in \Aut_{\calF_b}(P,e_P)$, and $\Aut(L,u)_{\overline{(P,e_P,\pi)}}$ is the stabilizer in $\Aut(L,u)$ of the $G$-orbit of $(P,e_P,\pi)$.  Since $b$ is a block with Frobenius inertial quotient, the block $kC_G(P)e_P$ is nilpotent for every nontrival subgroup $P$ of $D$, see for instance \cite[Theorem~10.5.2]{linckelmann2018}. Therefore, we have $l(ke_PC_G(P))=1$, and hence the multiplicity formula reduces to
\begin{align*}
\bigoplus_{(P,e_P) \in [\calF_b]} \bigoplus_{\pi \in [N_G(P,e_P)\dom\calP_{(P,e_P)}(L,u)/\Aut(L,u)]} V^{\Out(L,u)_{\overline{(P,e_P,\pi)}}}\,.
\end{align*}

Let $\mathcal{Q}_{D\rtimes E,p}$ denote the set of pairs $(P,s)$ of $p$-subgroups $P$ of $D\rtimes E$ and $p'$-elements $s$ of $N_{D\rtimes E}(P)$. Let also $[\mathcal{Q}_{D\rtimes E,p}]$ denote a set of representatives of $D\rtimes E$-orbits on $\mathcal{Q}_{D\rtimes E,p}$ under the conjugation map. Recall from \cite[Corollary~7.4]{BoucYilmaz2022} that the multiplicity of $S_{L,u,V}$ in $\FFTD_{D\rtimes E}$ is equal to the $\FF$-dimension of
\begin{align*}
\bigoplus_{\substack{(P,s)\in[\mathcal{Q}_{D\rtimes E,p}]\\ (\tilde{P},\tilde{s})\cong (L,u)}} V^{N_{D\rtimes E}(P,s)}\,,
\end{align*}
where for a pair $(P,s)\in \mathcal{Q}_{D\rtimes E,p}$ with $(\tilde{P},\tilde{s})\cong (L,u)$,  we fix an isomorphism $\phi_{P,s}:L\to P$ with $\phi_{P,s}(\lexp{u}l)=\lexp{s}\phi_{P,s}(l)$ for all $l\in L$ and we view $V$ as an $\FF N_{D\rtimes E}(P,s)$-module via the group homomorphism
\begin{align}
N_G(P,s)\to \Out(L,u)
\end{align}
that sends $g\in N_G(P,s)$ to the image of $\phi_{P,s}^{-1}\circ i_g\circ \phi_{P,s}$ in $ \Out(L,u)$.

\smallskip
{\rm (c)} Let $\calP_b(G,L,u)$ denote the set of triples $(P,e,\pi)$ where $(P,e)\in \calF_b$ and $\pi\in \calP_{(P,e_P)}(L,u)$.  Let also $\mathcal{Q}_{D\rtimes E,p}(L,u)$ denote the set of pairs $(P,s)$ in $\mathcal{Q}_{D\rtimes E,p}$ with the property that $(\tilde{P},\tilde{s})\cong (L,u)$.

If $(P,e,\pi)\in \calP_b(G,L,u)$, then $\pi i_u \pi^{-1}\in \Aut_{\calF_b}(P,e_P)$ by definition and since $\calF_b$ is equal to $\calF_{D\rtimes E}(D)$, it follows that there exists a $p'$-element $s$ of $N_{D\rtimes E}(P)$ with $\pi i_u \pi^{-1}=i_s$. This implies by \cite[Lemma~3.3]{BoucYilmaz2022} that $(\tilde{P},\tilde{s})\cong (L,u)$ and therefore we have a map
\begin{align*}
\Psi: \calP_b(G,L,u)\to \mathcal{Q}_{D\rtimes E,p}(L,u), \quad (P,e,\pi)\mapsto (P,s)\,.
\end{align*}

\begin{lemma}\label{lem bijection}
The map $\Psi$ induces a bijection
\begin{align*}
\overline{\Psi}:[G\dom \calP_b(G,L,u)/\Aut(L,u)]\to [\mathcal{Q}_{D\rtimes E,p}(L,u)]\,.
\end{align*}
\end{lemma}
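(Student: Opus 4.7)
The strategy is to use the identification $\calF_b = \calF_{D\rtimes E}(D)$ as the bridge between the $G$-action on $\calP_b(G,L,u)$ and the $D\rtimes E$-conjugation on $\mathcal{Q}_{D\rtimes E,p}(L,u)$. The Frobenius hypothesis enters through the equality $C_{D\rtimes E}(P) = D$ for every nontrivial $p$-subgroup $P \le D$, and through the bijectivity of the map $d \mapsto d\theta(d^{-1})$ on $D$ for any nontrivial $\theta \in E$, which is exactly the freeness assumption.

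For well-definedness of $\overline{\Psi}$, let $(P, e_P, \pi) \in \calP_b(G, L, u)$. The identification $\calF_b = \calF_{D\rtimes E}(D)$ provides some $s \in N_{D\rtimes E}(P)$ with $i_s|_P = \pi i_u \pi^{-1}$; since $\pi i_u \pi^{-1}$ has $p'$-order, the $p$-part of $s$ lies in $C_{D\rtimes E}(P) = D$, and $s$ can be taken to be a $p'$-element. Two such $p'$-choices differ by an element of $D$ and, by the Frobenius bijectivity above, are $D$-conjugate, hence represent the same class in $[\mathcal{Q}_{D\rtimes E,p}]$. For the left action of $g \in G$ sending the triple to $({}^gP, {}^ge_P, i_g \pi)$, the element becomes $g s g^{-1}$, and $\calF_b = \calF_{D\rtimes E}(D)$ realises $i_g|_P$ as conjugation by some $x \in D\rtimes E$, so $({}^gP, gsg^{-1})$ lies in the $D\rtimes E$-orbit of $(P, s)$. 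For the right action of $\alpha \in \Aut(L,u)$, writing $\alpha(u) = \ell u \ell^{-1}$ with $\ell \in L$ shows the new element to be $\pi(\ell) s \pi(\ell)^{-1}$, which is $D$-conjugate to $s$ via $\pi(\ell) \in P \le D$. Surjectivity is then immediate: for $(P, s) \in \mathcal{Q}_{D\rtimes E, p}(L, u)$ one has $P \le D$ since $D$ is the unique Sylow $p$-subgroup of $D\rtimes E$, and the fixed isomorphism $\phi_{P,s}$ satisfying $\phi_{P,s} i_u \phi_{P,s}^{-1} = i_s|_P$ furnishes a preimage $(P, e_P, \phi_{P, s})$.

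For injectivity, suppose $x \in D\rtimes E$ conjugates $\Psi(P, e_P, \pi) = (P, s)$ to $\Psi(P', e_{P'}, \pi') = (P', s')$. The fusion system identification lifts $i_x|_P$ to $i_g|_P$ for some $g \in G$ with $g(P, e_P) g^{-1} = (P', e_{P'})$; then $g \cdot (P, e_P, \pi) = (P', e_{P'}, i_g \pi)$, and setting $\alpha := {\pi'}^{-1} i_g \pi \in \Aut(L)$ one computes $\alpha i_u \alpha^{-1} = i_u$ on $L$, using $i_{gsg^{-1}}|_{P'} = i_{s'}|_{P'}$ (since $gsg^{-1}$ and $s'$ differ by an element of $D = C_{D\rtimes E}(P')$). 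Consequently $\alpha$ extends to an element of $\Aut(L, u)$ fixing $u$, placing $(P, e_P, \pi)$ and $(P', e_{P'}, \pi')$ in the same double coset in $G\dom\calP_b(G,L,u)/\Aut(L,u)$. The main obstacle is the coherent bookkeeping of all these $D$-ambiguities: the choice of $p'$-element $s$, the $G$-conjugation on Brauer pairs, the fusion-system lift of $x$ to $g$, and the $\Aut(L,u)$-action must all fit together so that the candidate $\alpha$ genuinely lies in $\Aut(L, u)$ rather than merely in $\Aut(L)$. The Frobenius hypothesis is crucial precisely because it absorbs every $D$-ambiguity into a $D$-conjugation, keeping the orbit space small enough for $\overline{\Psi}$ to be a bijection.
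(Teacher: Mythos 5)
Your proof follows essentially the same route as the paper's: well-definedness, surjectivity and injectivity are all mediated by the identification $\calF_b=\calF_{D\rtimes E}(D)$, with injectivity established by forming $\varphi=\pi'^{-1}\circ i_g\circ\pi$ and checking it commutes with $i_u$. In fact you are somewhat more careful than the paper on one point it glosses over, namely that the $p'$-element $s$ with $i_s=\pi i_u\pi^{-1}$ is only determined up to $C_{D\rtimes E}(P)=D$, and that any two such choices are $D$-conjugate by the freeness of the $E$-action (bijectivity of $d\mapsto d\,{}^{s}d^{-1}$ on the abelian group $D$).
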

\begin{proof}
First we show that the map $\overline{\Psi}$ is well-defined. Let $(P,e,\pi)$ and $(Q,f,\rho)$ be two elements in $\calP_b(G,L,u)$ that lie in the same $G\times \Aut(L,u)$-orbit. We need to show that $\overline{\Psi}(P,e,\pi)=\overline{\Psi}(Q,f,\rho)$. Write $\Psi(P,e,\pi)=(P,s)$ and $\Psi(Q,f,\rho)=(Q,t)$. Let $g\in G$ and $\varphi\in \Aut(L,u)$ such that
\begin{align*}
g\cdot (P,e,\pi)\cdot \varphi=(Q,f,\rho)\,.
\end{align*}
Then $(P,e)$ and $(Q,f)$ lie in the same isomorphism class in $[\calF_b]$ and hence $P$ and $Q$ are $D\rtimes E$-conjugate since $\calF_b=\calF_{D\rtimes E}(D)$. Thus, there exists $h\in D\rtimes E$ with $i_g=i_h:P\to Q$.  Hence $\rho=i_g\pi\varphi=i_h\pi\varphi:L\to Q$. Since $\varphi\in\Aut(L,u)$, one has $\varphi\circ i_u=i_u\circ\varphi$. Therefore,
\begin{align*}
i_t=\rho i_u\rho^{-1}=i_h\pi\varphi i_u \varphi^{-1}\pi^{-1}i_{h^{-1}}=i_h\pi i_u \pi^{-1}i_{h^{-1}}=i_hi_s i_{h^{-1}}=i_{hsh^{-1}}\,.
\end{align*}
This shows that $(Q,t)=h\cdot (P,s)$ and hence the map $\overline{\Psi}$ is well-defined.

Now we show that $\overline{\Psi}$ is surjective.  Let $(P,s)\in \mathcal{Q}_{D\rtimes E,p}(L,u)$. Since $(\tilde{P},\tilde{s})\cong (L,u)$, again by \cite[Lemma~3.3]{BoucYilmaz2022}, there exists $\pi:L\to P$ such that $\pi i_u=i_s\pi$, i.e. $\pi i_u\pi^{-1}=i_s:P\to P$. Since $\calF_{D\rtimes E}(D)=\calF_b$, it follows that there exists $g\in N_G(P,e)$ with $i_s=i_g$, and hence $(P,e,\pi)\in \calP_b(G,L,u)$ with $\overline{\Psi}(P,e,\pi)=(P,s)$. Thus, $\overline{\Psi}$ is surjective.

Finally, we show that $\overline{\Psi}$ is injective. Let $(P,e,\pi), (Q,f,\rho)\in \calP_b(G,L,u)$ be elements with $\overline{\Psi}(P,e,\pi)=\overline{\Psi}(Q,f,\rho)$. Write $(P,s)=\Psi(P,e,\pi)$ and $(Q,f)=\Psi(Q,f,\rho)$. Then there exists $h\in D\rtimes E$ such that
\begin{align*}
h\cdot (P,s)=(Q,t)\,.
\end{align*}
Again, there exists $g\in G$ such that $i_g=i_h:P\to Q$.  Define
\begin{align*}
\varphi:=\pi^{-1}\circ i_g^{-1}\circ \rho: L\to L\,.
\end{align*}
One has
\begin{align*}
\varphi\circ i_u&=\pi^{-1}\circ i_g^{-1}\circ \rho\circ i_u=\pi^{-1}\circ i_g^{-1}\circ i_t\circ \rho=\pi^{-1}\circ i_g^{-1}\circ i_g\circ i_s\circ i_g^{-1} \circ \rho \\&
= \pi^{-1}\circ i_s\circ i_g^{-1} \circ \rho = i_u\circ \pi^{-1}\circ i_g^{-1} \circ \rho =i_u\circ \varphi
\end{align*}
which shows that $\varphi\in \Aut(L,u)$. Moreover, one has
\begin{align*}
g\cdot (P,e,\pi)\cdot \varphi =(Q,f,\rho)
\end{align*}
and so the map $\overline{\Psi}$ is injective.
\end{proof}

\begin{lemma}\label{lem stabilizers}
Let $(P,e,\pi)\in [G\dom \calP_b(G,L,u)/\Aut(L,u)]$ and $(P,s)=\overline{\Psi}(P,e,\pi)\in [\mathcal{Q}_{D\rtimes E,p}]$.  Then the image of $N_{D\rtimes E}(P,s)$ in $\Out(L,u)$ is equal to $\Out(L,u)_{\overline{(P,e,\pi)}}$.
\end{lemma}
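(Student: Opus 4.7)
The plan is to identify $\phi_{P,s}$ with $\pi$, which is legitimate because by construction of $\overline{\Psi}$ one has $\pi i_u \pi^{-1}=i_s$. Under this identification the homomorphism of part (b) sends $h\in N_{D\rtimes E}(P,s)$ to $\overline{\pi^{-1} i_h \pi}\in\Out(L,u)$, and unwinding the left $G$-action $g\cdot(P,e,\pi)=(P,e,i_g\circ\pi)$ together with the right $\Aut(L,u)$-action $(P,e,\pi)\cdot\alpha=(P,e,\pi\circ\alpha)$ shows that $\Out(L,u)_{\overline{(P,e,\pi)}}$ is exactly the image in $\Out(L,u)$ of those $\alpha\in\Aut(L,u)$ of the form $\pi^{-1} i_g \pi$ for some $g\in N_G(P,e)$. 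I will establish the two resulting inclusions separately.

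For the forward inclusion, I would take $h\in N_{D\rtimes E}(P,s)$ and invoke the equality of fusion systems $\calF_b=\calF_{D\rtimes E}(D)$, which yields
\[
\Aut_{\calF_b}(P,e) \;=\; N_G(P,e)/C_G(P) \;=\; N_{D\rtimes E}(P)/C_{D\rtimes E}(P),
\]
to choose $g\in N_G(P,e)$ with $i_g|_P = i_h|_P$. Then $\alpha := \pi^{-1} i_g \pi = \pi^{-1} i_h \pi$ commutes with $i_u = \pi^{-1} i_s \pi$ because $h$ centralizes $s$, so $\alpha\in\Aut(L,u)$, and the equality $i_g\circ\pi=\pi\circ\alpha$ gives $g\cdot(P,e,\pi)=(P,e,\pi)\cdot\alpha$, placing $\bar\alpha$ in the stabilizer.

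For the reverse inclusion, I would start with $\alpha=\pi^{-1} i_g \pi\in\Aut(L,u)$ (for some $g\in N_G(P,e)$) and transport back by lifting $i_g|_P$ to some $h'\in N_{D\rtimes E}(P)$ with $i_{h'}|_P=i_g|_P$, using the fusion system equality again. Since $D$ is the unique Sylow $p$-subgroup of the Frobenius group $D\rtimes E$, we have $P\le D$; and by replacing $(P,s)$ by a suitable $D$-conjugate (equivalently, $(P,e,\pi)$ by $(P,e,i_{d^{-1}}\circ\pi)$ in the same $G$-orbit, via an element $d\in D\le C_G(P)\le N_G(P,e)$), I may assume $s\in E$. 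Decomposing $h'=d_0 e_0$ with $d_0\in D$ and $e_0\in E$, and using $D\subseteq C_{D\rtimes E}(P)$ (from $D$ abelian containing $P$), I would obtain $i_{h'}|_P = i_{e_0}|_P$ and $e_0\in N_E(P)$.

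The final step promotes $e_0$ into $N_{D\rtimes E}(P,s)$. Because $\alpha\in\Aut(L,u)$, $i_{e_0}=i_{h'}$ commutes with $i_s$ on $P$, so $[e_0,s]\in C_{D\rtimes E}(P)$; but $[e_0,s]\in E$ (both factors lie in $E$), and the freeness of the action of $E$ on $D\setminus\{1\}$ yields $C_E(P)=1$ for $P\ne 1$, hence $C_{D\rtimes E}(P)\cap E = 1$. Therefore $[e_0,s]=1$, i.e., $e_0\in C_E(s)\cap N_E(P)\subseteq N_{D\rtimes E}(P,s)$, and taking $h:=e_0$ yields $\pi^{-1} i_h \pi = \alpha$, as needed. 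The main obstacle will be precisely this promotion step: without the Frobenius hypothesis one would need a genuine cohomological splitting argument over $C_D(s)$, whereas here the freeness of the $E$-action makes the $E$-component of $h'$ automatically centralize $s$.
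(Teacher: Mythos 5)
Your proof is correct and follows essentially the same route as the paper: both identify the image of $N_{D\rtimes E}(P,s)$ in $\Out(L,u)$ with the classes $\overline{\pi^{-1}i_g\pi}$ for $g\in N_G(P,e)$ such that $i_g$ commutes with $\pi i_u\pi^{-1}=i_s$, using the equality $\calF_b=\calF_{D\rtimes E}(D)$. In fact your reverse inclusion (promoting $h'\in N_{D\rtimes E}(P)$ with $[i_{h'},i_s]=\id_P$ to an honest element of $N_{D\rtimes E}(P,s)$ via $C_{D\rtimes E}(P)=D$ and $C_E(P)=1$) carefully justifies a step that the paper's one-line chain of set equalities leaves implicit.
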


\begin{proof}
We have $\pi i_u\pi^{-1}=i_s$ and hence the image of $N_{D\rtimes E}(P,s)$ is given by
\begin{align*}
N_{D\rtimes E}(P,s)&\to \Out(L,u)\\
h &\mapsto \overline{\pi^{-1}\circ i_h\circ \pi}
\end{align*}
Note that since $\lexp{h}s=s$, we have $i_hi_s=i_si_h$, i.e., $i_h\pi i_u\pi^{-1}=\pi i_u\pi^{-1}i_h$. Therefore the image is 
\begin{align*}
\{\overline{\pi^{-1} i_h\pi} \mid h\in D\rtimes E, i_h:P\to P, \lexp{h}s=s\}&=\{\overline{\pi^{-1} i_g\pi} \mid g\in N_G(P,e), i_g\pi i_u\pi^{-1}=\pi i_u\pi^{-1}i_g\}\\&
=\{\overline{\pi^{-1} i_g\pi}\in\Out(L,u) \mid \pi^{-1} i_g\pi=i_g,  g\in N_G(P,e)\}\\&
=\Out(L,u)_{\overline{(P,e,\pi)}}
\end{align*}
as was to be shown.
\end{proof}

{\em Proof of Theorem~\ref{thm stableequivalence}:}
We need to show that for any $L\neq 1$, the multiplicities of a simple diagonal $p$-permutation functor $S_{L,u,V}$ in $\FFTD_{G,b}$ and in $\FFTD_{D\rtimes E}$ are equal.  But this follows from Lemma~\ref{lem bijection} and Lemma~\ref{lem stabilizers}.
\qed

{\em Proof of Corollary~\ref{cor p-permutationequivalence}:}
Part {\rm (i)} follows from Theorem~\ref{thm stablefunequiv}{\rm (i)} and Theorem~\ref{thm stableequivalence}. Part {\rm (ii)} follows from \cite[Theorem~10.5.10]{linckelmann2018}.
\qed

\centerline{\rule{5ex}{.1ex}}
\begin{flushleft}
Serge Bouc, CNRS-LAMFA, Universit\'e de Picardie, 33 rue St Leu, 80039, Amiens, France.\\
{\tt serge.bouc@u-picardie.fr}\vspace{1ex}\\
Deniz Y\i lmaz, Department of Mathematics, Bilkent University, 06800 Ankara, Turkey.\\
{\tt d.yilmaz@bilkent.edu.tr}
\end{flushleft}


\begin{thebibliography}{00}








 

        
        
    

\bibitem[Br90]{Broue1990}{\sc M.~Brou{\'e}:}
	Isom{\'e}tries parfaites, types de blocs, cat{\'e}gories d{\'e}riv{\'e}es. 
	{\sl Ast{\'e}risque} No. {\bf 181-182} (1990), 61--92.



\bibitem[BP20]{BoltjePerepelitsky2020}{\sc R.~Boltje, P.~Perepelitsky:}
        $p$-permutation equivalences between blocks of group algebras.
        arXiv:2007.09253.

\bibitem[BX08]{BoltjeXu2008}{\sc R.~Boltje, B.~Xu:}
	On $p$-permutation equivalences: between Rickard equivalences and
   isotypies
	{\sl Trans. Amer. Math. Soc.} {\bf 360}(10) (2008) 5067--5087.

\bibitem[BY20]{BoucYilmaz2020} {\sc S.~Bouc, D.~Y{\i}lmaz:}
	Diagonal $p$-permutation functors.
	{\sl J. Algebra} {\bf 556} (2020), 1036--1056.
	
\bibitem[BY22]{BoucYilmaz2022}{\sc S.~Bouc, D.~Y\i lmaz:}
	Diagonal $p$-permutation functors, semisimplicity, and functorial equivalence of blocks.
	{\sl Adv.  Math.} {\bf 411} (2022), 108799.


	
\bibitem[L18]{linckelmann2018}{\sc M.~Linckelmann:}
	The block theory of finite group algebras. {V}ol. {II}.
	Cambridge University Press, Cambridge, 2018.






\end{thebibliography}
\end{document}